\newtheorem{theorem}{Theorem}[section]
\newtheorem{proposition}[theorem]{Proposition}
\newtheorem{cor}[theorem]{Corollary}
\theoremstyle{definition}
\newtheorem{definition}[theorem]{Definition}
\theoremstyle{remark}
\numberwithin{equation}{section}
\begin{document}

\title{A Paradoxical Decomposition of the Real Line}


\author{Shelley Kandola}
\address{Shelley Kandola, St. Lawrence University, 23 Romoda Drive, SMC \# 2218, Canton, NY 13617 USA}
\curraddr{Department of Mathematics, University of Minnesota, 206 Church Street, Minneapolis, MN 55455 USA}
\email{kando004@math.umn.edu}
\thanks{}

\author{Sam Vandervelde}
\address{Sam Vandervelde, Dept of Math, CS \& Stats, St. Lawrence University, 23 Romoda Drive, Canton NY 13617 USA}
\curraddr{Proof School, 555 Post Street, San Francisco, CA 94102 USA}
\email{svandervelde@proofschool.org}
\thanks{}

\subjclass[1991]{Primary 20E05, Secondary 20B35}

\date{}

\begin{abstract}
In this paper we demonstrate how to partition the real number line into four subsets which may be reassembled, via ``piecewise rigid functions'' that preserve Lebesgue measure, into two copies of the line. We then employ a similar process to split the line into $2k$ pieces that yield $k$ copies of the line, or even into countably many subsets to obtain countably many copies of $\mathbb{R}$.
\end{abstract}

\maketitle

\section{Motivation}
In its most familiar form, the Banach-Tarski paradox \cite{B24} asserts that it is possible
to decompose a sphere into finitely many disjoint subsets that may be reassembled
via translations and rotations into two spheres congruent to the original one, with
no overlap or missing points. It is also known (see \cite{W99}) that such a sleight of hand
is not possible for point sets within the real number line, at least not if we restrict
ourselves to finitely many subsets acted upon by isometries.

Wagon \cite{W99} points out that if one enlarges the group of available actions
to include all bijections preserving Lebesgue measure, then it does become possible
to obtain a paradoxical decomposition of an interval, for instance. However, the
means for exhibiting such a decomposition is not altogether satisfactory, in that it
relies upon a black box; namely, a measure-preserving bijection from $[0, 1)$ to $S^2$,
coupled with the Banach-Tarski result.

The purpose of this note is to present an explicit example of such a paradoxical
decomposition of the real number line, using a minimum of external machinery.
Since actions by isometries alone are insufficient to produce a paradox, we employ
functions that are ``piecewise rigid.''

\begin{definition}
A function $f:\mathbb{R} \to \mathbb{R}$ is {\emph{piecewise rigid}} if
\begin{enumerate}
\item $f$ is a bijection,
\item $f$ is differentiable with $|f'(x)|=1$, except at a countable number of jump discontinuities, and
\item the set of discontinuities of $f$ has no limit points in $\mathbb{R}$.
\end{enumerate}
\end{definition}

Clearly piecewise rigid functions preserve Lebesgue measure. We mention without proof that the set of all piecewise rigid function on $\mathbb{R}$ form a group with respect to composition. We can now describe our goal more precisely: we will partition the real number line into four subsets in such a way that it is possible to ``reassemble'' the subsets using piecewise rigid functions to produce two copies of the line.

\section{From Permutations to Free Groups}
A key ingredient in the construction of the paradoxical subsets consists of a pair of functions that generate a free group. Therefore our primary task is to produce such functions. We do so by observing that given any permutation of the integers, we may extend it to a piecewise rigid bijection of the real numbers.

\begin{definition}
Given a permutation $\sigma:\mathbb{Z} \to \mathbb{Z}$, we define $f_\sigma:\mathbb{R} \to \mathbb{R}$ by setting $f_\sigma(x) = \sigma(\lfloor x\rfloor)+\langle x \rangle$, where $\lfloor x \rfloor$ is the greatest integer function and $\langle x \rangle$ is the fractional part of $x$, so that $x=\lfloor x \rfloor + \langle x \rangle$.
\end{definition}

\begin{figure}
\includegraphics[scale=.5]{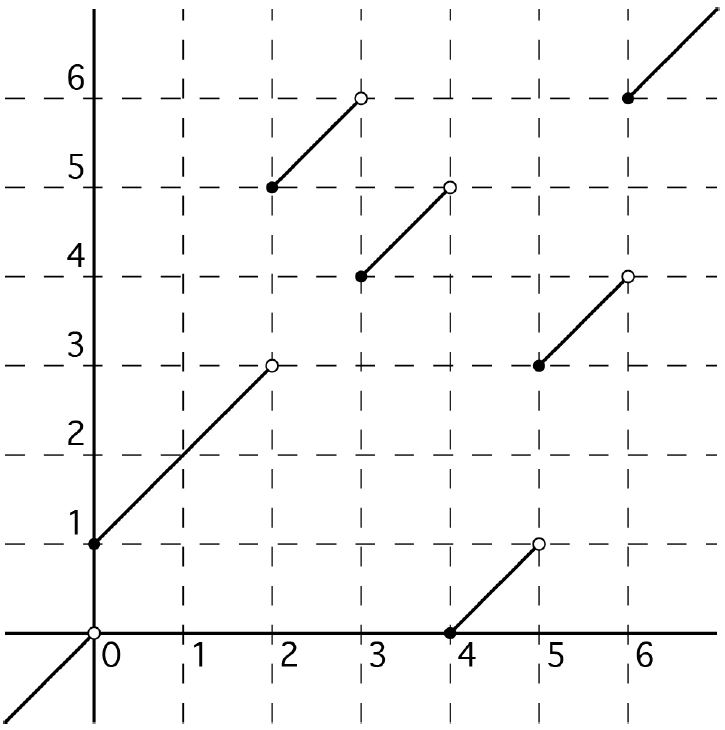}
\caption{The graph of the function $f_\sigma$ corresponding to $\sigma = (012534)$.}
\label{graph}
\end{figure}

If we label $[n,n+1)$ as ``interval $n$'' for $n \in \mathbb{Z}$, then the function $f_\sigma$ has the effect of permuting these half-open intervals as prescribed by $\sigma$. For example, Fig. \ref{graph} illustrates the graph of the function corresponding to the permutation $\sigma = (012534)$, where we assume that $\sigma$ fixes all other integers. It is clear that composing such functions commutes with composition of the underlying permutations; in other words, if $\sigma,\tau:\mathbb{Z} \to \mathbb{Z}$ are permutations, then $f_{\sigma\tau} = f_\sigma\circ f_\tau$. In particular, $f_\sigma \circ f_{\sigma^{-1}}$ is the identity, so $f_\sigma^{-1} = f_{\sigma^{-1}}$. Therefore the group generated by $f_\sigma$ and $f_\tau$ will be free as long as no composition involving $\sigma, \tau, \sigma^{-1},\tau^{-1}$ that is reduced (meaning no permutation is adjacent to its inverse) is equal to the identity.

Beyond merely generating a free group, we seek a pair of functions for which the elements of the resulting free group have few fixed points. In the standard Banach-Tarski construction the nontrivial free group elements are all rotations, each having two fixed points at the poles where the axis of rotation meets the surface of the sphere. These points are a nuisance -- they must be handled separately and increase the number of pieces required for the construction beyond the basic four. In light of this, we are interested in finding $\sigma$ and $\tau$ for which no composition involving $\sigma,\tau,\sigma^{-1},\tau^{-1}$ even has a fixed point, a much stronger condition than the one required to obtain a free group.

Momentarily we will exhibit such a pair of permutations $\sigma$ and $\tau$. Observe that neither permutation can have a cycle of finite length, for if $\sigma$ had a cycle of length $n$, for example, then $\sigma^n$ would have $n$ fixed points. Neither can $\sigma$ consist of a single infinite cycle; for if $\tau(0)=a$ then some power of $\sigma$ or $\sigma^{-1}$ will map $a$ to 0, meaning that $\sigma^n\tau$ has a fixed point for some $n \in \mathbb{Z}$. In fact, further consideration shows that if either $\sigma$ or $\tau$ consists of only finitely many infinite cycles then a fixed point is again inevitable. So $\sigma$ and $\tau$ must be constructed with some care.

\begin{figure}
\includegraphics[scale=.5]{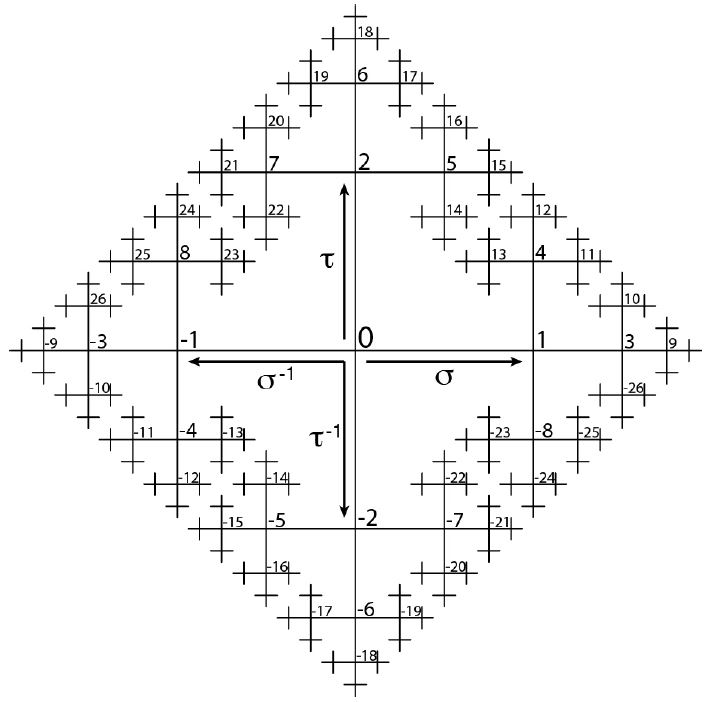}
\caption{The labeled Cayley graph used to define $\sigma$ and $\tau$.}
\label{cay}
\end{figure}

\begin{proposition}
There exist permutations $\sigma,\tau:\mathbb{Z} \to \mathbb{Z}$ that generate a free subgroup within the group of all permutations of $\mathbb{Z}$, with the property that no element of this free group, other than the identity, has any fixed points.
\end{proposition}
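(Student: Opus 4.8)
The plan is to avoid building $\sigma$ and $\tau$ as hand-crafted permutations and instead let them \emph{be} free generators, realized through the action of a free group on itself. Let $F$ be the free group on two generators $a$ and $b$. Being finitely generated, $F$ is countably infinite, so I would fix any bijection $\phi \colon F \to \mathbb{Z}$; concretely, the elements of $F$ are the vertices of the $4$-regular tree in Fig.~\ref{cay}, and $\phi$ merely labels those vertices by integers. For $g \in F$ write $\lambda_g \colon F \to F$ for left translation, $\lambda_g(w) = gw$, and transport it to $\mathbb{Z}$ by conjugation:
\begin{equation*}
\sigma = \phi \circ \lambda_a \circ \phi^{-1}, \qquad \tau = \phi \circ \lambda_b \circ \phi^{-1}.
\end{equation*}
Each is a permutation of $\mathbb{Z}$, being conjugate to a bijection of $F$, and this holds for any choice of $\phi$.

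Both required properties then follow from standard features of the left regular action. For freeness, note that conjugation by $\phi$ is an isomorphism $\mathrm{Sym}(F) \to \mathrm{Sym}(\mathbb{Z})$, so $\langle \sigma, \tau\rangle \cong \langle \lambda_a, \lambda_b\rangle$ with $\sigma,\tau$ corresponding to $\lambda_a, \lambda_b$. The map $g \mapsto \lambda_g$ is the left regular representation of $F$, a homomorphism into $\mathrm{Sym}(F)$; since $\lambda_g = \mathrm{id}$ forces $g = g \cdot e = e$, it is injective. Hence $\langle \lambda_a, \lambda_b\rangle$ is a faithful copy of $F$ in which $\lambda_a, \lambda_b$ play the roles of $a, b$, and therefore $\langle \sigma, \tau\rangle$ is free on $\sigma$ and $\tau$.

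For the absence of fixed points, observe that a nontrivial element of $\langle \sigma, \tau\rangle$ has the form $\phi \circ \lambda_w \circ \phi^{-1}$ for some reduced word $w \neq e$ in $F$. Such a map fixes an integer $n$ exactly when $\lambda_w$ fixes $\phi^{-1}(n)$, i.e. when $w \cdot \phi^{-1}(n) = \phi^{-1}(n)$; cancelling $\phi^{-1}(n)$ in the group $F$ forces $w = e$, a contradiction. So no nonidentity element of the free group $\langle \sigma, \tau\rangle$ has a fixed point, which is precisely the claim.

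The real difficulty here is conceptual rather than technical: the obstacle is to recognize that the self-action of $F$ solves both demands simultaneously and for free, so that the entire argument reduces to faithfulness of the regular representation (freeness of the generated group) together with freeness of the action (no fixed points). The preliminary remarks in the text, ruling out finite cycles, a single infinite cycle, and finitely many infinite cycles, indicate how delicate a direct construction would be; by contrast, for every $w \neq e$ the word has infinite order in $F$, so $\lambda_w$ decomposes into infinitely many infinite cycles, automatically supplying exactly the cycle structure those remarks show to be necessary.
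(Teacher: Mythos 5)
Your proof is correct and is essentially the paper's argument in algebraic dress: the paper labels the vertices of the Cayley graph of $F_2$ (which are exactly the elements of $F_2$) by integers and lets $\sigma,\tau$ move one step along edges, which is precisely your conjugated left-regular action, and its ``a reduced word traces a non-backtracking path in a tree, hence cannot return to its start'' is the geometric restatement of your cancellation $w\cdot x = x \Rightarrow w = e$. Both freeness and fixed-point-freeness come from the same source in each version, so the two proofs differ only in presentation.
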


\begin{proof}
We will construct such permutations using the Cayley graph for a free group on two generators. Recall that this graph contains countably many vertices, each with degree four, typically displayed with horizontal and vertical edges, as in Fig. \ref{cay}. We have labeled the vertices with all the integers; any particular bijection between $\mathbb{Z}$ and the vertex set will suffice for the construction. For each horizontal edge having vertices labeled $m$ and $n$ from left to right, we now declare $\sigma(m)=n$. From a dynamic point of view, $\sigma$ maps each integer to the one appearing immediately to its right in the Cayley graph, while $\sigma^{-1}$ maps each integer to the one directly to the left. Similarly, $\tau(m)$ and $\tau^{-1}(m)$ are defined as the integers just above and below $m$, respectively. Thus $\sigma(2)=5$ and $\tau^{-1}(8)=-1$, for instance.

Therefore the permutation given by a reduced word on $\sigma$ and $\tau$ will map each integer to an image that may be reached via a certain fixed sequence of steps through the Cayley graph, each step being left, right, up or down. Since the word is reduced the path will never double back on itself, and since the Cayley graph is a tree the image cannot be equal to the original integer. In other words, no composition of $\sigma$ and $\tau$ can have any fixed points, since mapping an integer to itself corresponds to a cycle through the tree, which is impossible.
\end{proof}

In fact we have shown slightly more; namely, given any distinct $m,n \in \mathbb{Z}$ there exists a unique reduced composition of $\sigma$ and $\tau$ mapping $m$ to $n$. For instance, to map $-14$ to 6 we simply trace a path through the tree from $-14$ down to 5, right to $-2$, then up to 6, revealing that $\tau^3(\sigma(\tau^{-1}(-14))))=6$.

\section{Describing the Decomposition}

We are now in possession of a pair of functions $f_\sigma,f_\tau:\mathbb{R} \to \mathbb{R}$ that are piecewise rigid, generate a free group, and have the property that no reduced composition of these functions has any fixed points. In what follows we will write $g=f_\sigma$ and $h=f_\tau$ to simplify notation. The remainder of the argument is standard, although considerably simpler than the original proof involving spheres, since there are no ``bad points'' to handle separately, nor is the axiom of choice required. We follow the approach described in \cite{ZU}.

\begin{theorem}
It is possible to partition the real number line $\mathbb{R}$ into four subsets $A,B,C$ and $D$ such that
\[A \sqcup g(B) = C\sqcup h(D) = \mathbb{R},\]
where $g,h:\mathbb{R} \to \mathbb{R}$ are piecewise rigid functions and $\sqcup$ denotes disjoint union.
\end{theorem}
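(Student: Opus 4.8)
The plan is to reduce the statement to a purely group–theoretic paradoxical decomposition of the free group $F=\langle g,h\rangle$ and then transport it to $\mathbb{R}$ through an explicit, choice-free labeling of points. First I would record how $F$ acts on $\mathbb{R}$. Because $f_\sigma(m+t)=\sigma(m)+t$ for every $m\in\mathbb{Z}$ and $t\in[0,1)$, each element of $F$ fixes the fractional part $\langle x\rangle$ and merely permutes the integer part, so the orbits of $F$ are precisely the sets $t+\mathbb{Z}$ for $t\in[0,1)$. By the Proposition together with the remark following it, the action of $F$ on $\mathbb{Z}$ is free and transitive; hence each orbit is a torsor for $F$, and for every $x$ there is a unique reduced word $w_x\in F$ that maps $0$ to $\lfloor x\rfloor$ under the action on $\mathbb{Z}$. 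The map $\Phi\colon\mathbb{R}\to[0,1)\times F$, $\Phi(x)=(\langle x\rangle,w_x)$, is then a bijection requiring no axiom of choice, the integer $0$ serving as a canonical base point $e$ in every orbit. A direct computation gives $\Phi(g(x))=(\langle x\rangle,\,g\,w_x)$ and likewise for $h$, so $g$ and $h$ act through $\Phi$ as left multiplication on the $F$-coordinate.

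It therefore suffices to partition $F$ into sets $\mathcal{A},\mathcal{B},\mathcal{C},\mathcal{D}$ with $\mathcal{A}\sqcup g\mathcal{B}=F$ and $\mathcal{C}\sqcup h\mathcal{D}=F$; taking $A=\Phi^{-1}([0,1)\times\mathcal{A})$ and so on, and using $g(\Phi^{-1}([0,1)\times\mathcal{B}))=\Phi^{-1}([0,1)\times g\mathcal{B})$, pulls the decomposition back to the required $A,B,C,D\subseteq\mathbb{R}$. For the group decomposition I would write $W(x)$ for the reduced words beginning with the letter $x$, so that $F=\{e\}\sqcup W(g)\sqcup W(g^{-1})\sqcup W(h)\sqcup W(h^{-1})$ and $h\,W(h^{-1})=\{e\}\sqcup W(h^{-1})\sqcup W(g)\sqcup W(g^{-1})$. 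Setting $\mathcal{C}=W(h)$ and $\mathcal{D}=W(h^{-1})$ already yields $\mathcal{C}\sqcup h\mathcal{D}=F$.

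The one genuine subtlety, and the step I expect to demand the most care, is that the naive assignment $\mathcal{A}=W(g)$, $\mathcal{B}=W(g^{-1})$ leaves the identity $e$ — equivalently the whole interval $[0,1)$ of orbit base points — out of all four pieces, so that they fail to partition $F$. This is the familiar problem of absorbing the identity, and here it is settled by a Hilbert-hotel argument in the $g$-direction rather than by any appeal to fixed points. Concretely I would take $\mathcal{B}=\{\,g^{-m}u:\ m\ge1,\ u\in W(h)\cup W(h^{-1})\,\}\subseteq W(g^{-1})$ and $\mathcal{A}=\{e\}\cup W(g)\cup\bigl(W(g^{-1})\setminus\mathcal{B}\bigr)$. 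A short check then shows $g\mathcal{B}=\mathcal{B}\sqcup W(h)\sqcup W(h^{-1})$, so that $\mathcal{A}\sqcup g\mathcal{B}=F$, while by construction $\mathcal{A}\sqcup\mathcal{B}\sqcup\mathcal{C}\sqcup\mathcal{D}=F$.

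Finally I would verify the bookkeeping. That $A,B,C,D$ are disjoint with union $\mathbb{R}$ is automatic, since $\Phi$ is a bijection and $\mathcal{A},\mathcal{B},\mathcal{C},\mathcal{D}$ partition $F$; and $A\sqcup g(B)=\Phi^{-1}([0,1)\times(\mathcal{A}\sqcup g\mathcal{B}))=\mathbb{R}$, with the identical argument for $C\sqcup h(D)=\mathbb{R}$. Since no nontrivial element of $F$ has a fixed point and the orbit labeling is completely explicit, the construction uses neither the axiom of choice nor any separate handling of exceptional points, in contrast to the spherical case.
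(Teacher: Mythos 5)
Your proof is correct and follows essentially the same route as the paper: both transport the standard paradoxical decomposition of the free group $F=\langle g,h\rangle$ to $\mathbb{R}$ via the free, transitive action on the intervals $[n,n+1)$ guaranteed by the Proposition, so that no choice and no exceptional points are needed. The only difference is cosmetic: the paper absorbs the identity by moving the pure powers $h^b$ into the $h$-side piece $G_4$, whereas you absorb it on the $g$-side by removing the pure powers $g^{-m}$ from $\mathcal{B}$ and placing them, together with $e$, in $\mathcal{A}$ --- the same Hilbert-hotel adjustment, and your verification of both identities checks out.
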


\begin{proof}
Let $g=f_\sigma$ and $h=f_\tau$ be the functions constructed above, and let $G$ be the free group generated by $g$ and $h$. Then an element of $G$ has the form
\[g^{a_1}h^{b_1}g^{a_2}\hdots h^{b_{k-1}}g^{a_k}h^{b_k}, \textrm{ with }a_i,b_j \in \mathbb{Z}\textrm{ for }1\leq i,j\leq k,\]
where we require exponents to be nonzero except for possibly $a_1$ or $b_k$, to allow for compositions that begin with a power of $h$ or end with a power of $g$.

We next partition the elements of $G$ into four subsets. To begin, let $G_1$ consist of all elements beginning with a positive power of $g$ and $G_2$ contain all elements beginning with a negative power of $g$. One can then verify that $G=G_1 \sqcup g(G_2)$, where $g(G_2)$ is the set $\{g \circ f|f\in G_2\}$.

It would be convenient to create similar sets according to the initial power of $h$, but then the identity would not appear in any subset. So we instead let $G_3$ consist of all elements beginning with a positive power of $h$ followed by at least one more term, and define $G_4$ as the set containing any element beginning with a negative power of $h$, along with the identity and elements of the form $h^b$ with $b \in \mathbb{N}$. We then have $G = G_3\sqcup h(G_4)$ by construction.

Now label $I = [0,1)$ and set $A = G_1(I),B=G_2(I),C=G_3(I)$ and $D = G_4(I)$, where
\[G_1(I)=\bigcup_{f\in G_1}f(I),\]
and similarly for the other sets. The fact that there exists a unique permutation generated by $\sigma$ and $\tau$ mapping 0 to any particular integer ensures that the intervals $f(I)$ for $f \in G$ are distinct and together comprise all of $\mathbb{R}$. Because the sets $G_1$ through $G_4$ partition $G$, we conclude that $\mathbb{R} = A\sqcup B\sqcup C\sqcup D$ as well. Finally, the fact that $G = G_1\sqcup g(G_2)$ translates to $\mathbb{R} = A \sqcup g(B)$, and we similarly find that $\mathbb{R} = C\sqcup h(D)$, as desired.
\end{proof}

A paradoxical decomposition of a set typically has a consequence regarding what sorts of measures are possible on that set. In our case we have the following.

\begin{cor}
There is no finitely additive measure $\mu$ on the real number line with $\mu(\mathbb{R}) \in (0,\infty)$ that is also invariant under piecewise rigid maps.
\end{cor}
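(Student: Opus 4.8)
The plan is to argue by contradiction, combining finite additivity with the invariance of $\mu$ and the two reassembly identities furnished by the theorem. First I would suppose that such a measure $\mu$ exists and set $M = \mu(\mathbb{R}) \in (0,\infty)$. Since a finitely additive measure invariant under a group of bijections is understood to be defined on the full power set of $\mathbb{R}$, every set appearing below is automatically assigned a value, and invariance means $\mu(f(S)) = \mu(S)$ for every piecewise rigid $f$ and every $S \subseteq \mathbb{R}$.

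Next I would exploit the partition $\mathbb{R} = A \sqcup B \sqcup C \sqcup D$ from the theorem. Finite additivity gives
\[ M = \mu(\mathbb{R}) = \mu(A) + \mu(B) + \mu(C) + \mu(D). \]
Then I would turn to the two reassembly equations. Because $g$ and $h$ are piecewise rigid and $\mu$ is invariant under such maps, we have $\mu(g(B)) = \mu(B)$ and $\mu(h(D)) = \mu(D)$. Applying finite additivity to the disjoint unions $\mathbb{R} = A \sqcup g(B)$ and $\mathbb{R} = C \sqcup h(D)$ then yields
\[ M = \mu(A) + \mu(B) \quad \textrm{and} \quad M = \mu(C) + \mu(D). \]

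Finally I would add these two identities and compare with the partition identity: summing gives $2M = \mu(A) + \mu(B) + \mu(C) + \mu(D) = M$, so $M = 0$, contradicting $M \in (0,\infty)$. There is no genuinely hard step here, since the entire content of the corollary is already packaged in the theorem, and the argument is the standard bookkeeping that converts a paradoxical decomposition into a nonexistence statement for invariant measures. The only point requiring a moment's care is the implicit assumption that $\mu$ is total and genuinely invariant under all piecewise rigid maps; once this is granted, the four pieces and their images are all measured and the computation goes through verbatim.
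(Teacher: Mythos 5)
Your proposal is correct and follows essentially the same argument as the paper: use finite additivity and invariance on the two reassembly identities to get $M=\mu(A)+\mu(B)=\mu(C)+\mu(D)$, compare with the four-piece partition of $\mathbb{R}$, and conclude $2M=M$, a contradiction. The only difference is cosmetic — you make explicit the assumption that $\mu$ is defined on all subsets, which the paper leaves implicit.
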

\begin{proof}
Suppose such a measure $\mu$ existed, say with $\mu(\mathbb{R})=m>0$. Then
\[m = \mu(A \sqcup g(B)) = \mu(A)+\mu(B)\textrm{ and }m=\mu(C\sqcup h(D)) = \mu(C)+\mu(D).\]
But we also know that 
\[m=\mu(A\sqcup B\sqcup C \sqcup D) = \mu(A)+\mu(B)+\mu(C)+\mu(D),\]
which implies that $m=2m$, a contradiction. Hence no such measure exists.
\end{proof}

This result is of interest given the existence of a finitely additive measure on the Lebesgue subsets of the real number line that is invariant with respect to isometries and for which $\mu(\mathbb{R})=1$, as constructed in \cite{M79}.

\section{Extensions}

It does not take a great leap of imagination to realize that a similar construction may be effected that will permit us to split the real number line into a disjoint union of $2k$ sets for any $k \in \mathbb{N}$, then reassemble them via piecewise rigid functions into $k$ copies of the line. One simply employs the Cayley graph for a free group on $k$ generators, labels the vertices with all the integers as before, defines permutations $\sigma_1$ through $\sigma_k$ which map integers in each of the $k$ ``positive directions'' within the graph, and subsequently defines functions $f_1$ through $f_k$ based on these permutations.

There is a paradoxical decomposition of the free group $G$ on $k$ generators into $k$ pairs of subsets analogous to before, in which $k-1$ of the pairs consist of words beginning with a positive or negative power of some $f_j$, say for $2 \leq j \leq k$, while the last pair involving $f_1$ utilizes the small variation that allows us to include the identity of $G$ among the subsets. The various subsets of $\mathbb{R}$ are then given by the images of $I=[0,1)$ as before, and we obtain the paradoxical decomposition in the same manner.

In fact, there is nothing to prevent one from carrying out the same construction beginning with the Cayley graph for a free group on countably many generators, since the vertex set of this graph is still countably infinite and thus may be labeled with all the integers. The remainder of the process goes through with hardly any modification. In summary, we have the following result.

\begin{theorem}
Let $k \geq 2$ be a positive integer. Then it is possible to partition the real number line $\mathbb{R}$ into $2k$ subsets $A_1,B_1,\hdots,A_k,B_k$ such that
\[A_1\sqcup f_1(B_1) = \hdots = A_k \sqcup f_k(B_k) = \mathbb{R},\]
for piecewise rigid functions $f_1,\hdots,f_k$. Furthermore, one can partition $\mathbb{R}$ into countably many subsets $A_1,B_1,A_2,B_2,\hdots$ such that
\[A_1\sqcup f_1(B_1) = A_2 \sqcup f_2(B_2) = \hdots = \mathbb{R},\]
where all $f_j$ are piecewise rigid functions.
\end{theorem}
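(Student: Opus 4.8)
The plan is to reprise the argument of the preceding theorem essentially verbatim, replacing the rank-two free group by the free group $G$ on $k$ generators (and then by the free group on countably many generators), and carrying out the extra bookkeeping needed to place the identity correctly once there are more than two generators.

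First I would construct the generating permutations. For the $k$-generator case, take the Cayley graph of the free group on $k$ generators, which is the $2k$-regular tree; for the countable case, take the Cayley graph of the free group on countably many generators, whose vertex set is still countably infinite. In either case fix a bijection between $\mathbb{Z}$ and the vertex set, sending $0$ to a chosen base vertex, and let $\sigma_j$ send each integer to the vertex reached by traversing the $j$th generator edge in the positive direction. Set $f_j = f_{\sigma_j}$; each such function is piecewise rigid, since its discontinuities lie among the integers, a set with no limit point in $\mathbb{R}$. Exactly as in the Proposition, the tree contains no cycles, so the $\sigma_j$ generate a free group in which no nontrivial element has a fixed point. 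Consequently the map $w \mapsto w(0)$ is a bijection from $G$ onto $\mathbb{Z}$ (injectivity from the absence of fixed points, surjectivity from connectedness of the tree), and therefore the intervals $w(I)$ for $w \in G$, where $I = [0,1)$, are pairwise disjoint and tile $\mathbb{R}$.

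The combinatorial heart is the group-level decomposition, and here I expect the only genuine subtlety. For each $j$ let $\mathcal{A}_j$ and $\mathcal{B}_j$ denote the elements of $G$ beginning with a positive, respectively negative, power of $f_j$; the computation from the previous theorem gives $G = \mathcal{A}_j \sqcup f_j(\mathcal{B}_j)$ for every $j$. These sets are pairwise disjoint, because a reduced word has a unique first letter, but their union omits the identity. The key observation is that the identity need only be placed once, so I would apply the identity-absorbing variation of the previous proof to a single generator, say $f_1$: redefine $\mathcal{A}_1$ to consist of the words beginning with a positive power of $f_1$ followed by at least one further term, and $\mathcal{B}_1$ to consist of the words beginning with a negative power of $f_1$ together with the identity and the pure powers $f_1^b$ with $b \in \mathbb{N}$. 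One checks that $G = \mathcal{A}_1 \sqcup f_1(\mathcal{B}_1)$ still holds and that the full collection $\{\mathcal{A}_j, \mathcal{B}_j\}$ now partitions $G$, while the remaining generators retain the clean decomposition above. In the countable case nothing changes, since every reduced word still has a well-defined first letter among the countably many generators.

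Finally I would transport the decomposition to the line. Setting $A_j = \mathcal{A}_j(I)$ and $B_j = \mathcal{B}_j(I)$, the tiling of $\mathbb{R}$ by the intervals $w(I)$ turns the partition of $G$ into a partition of $\mathbb{R}$, and the relation $f_j(w(I)) = (f_j w)(I)$, which follows since composition of the $f_j$ mirrors composition of permutations, converts each group identity $G = \mathcal{A}_j \sqcup f_j(\mathcal{B}_j)$ into $\mathbb{R} = A_j \sqcup f_j(B_j)$. This yields the $2k$-piece statement for every $k \geq 2$, and applying the identical argument to the free group on countably many generators yields the countable statement.
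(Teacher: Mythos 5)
Your proposal is correct and follows the paper's own route exactly: the Cayley graph of the free group on $k$ (or countably many) generators labeled by $\mathbb{Z}$, the fixed-point-free permutations $\sigma_j$, the clean decomposition $G=\mathcal{A}_j\sqcup f_j(\mathcal{B}_j)$ for $j\geq 2$ with the identity-absorbing variation applied only to $f_1$, and the transport to $\mathbb{R}$ via images of $[0,1)$. In fact you supply more detail than the paper, which presents this extension only as a sketch.
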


We remark in closing that our approach relies upon the fact that the real number line is unbounded; an analogous sort of construction could not be performed to obtain a paradoxical decomposition of an interval, for instance, at least not using piecewise rigid functions.
\newpage

\section{Acknowledgements}
The authors are grateful to Japheth Wood for a conversation that led to a considerably more streamlined construction of $\sigma$ and $\tau$ than was originally envisioned. Thanks also to Paul Zeitz for sharing his carefully crafted course notes as well as to Stan Wagon for observing that our decomposition leads to implications regarding measures on the real number line.


\end{document}